\documentclass[a4paper,12pt]{amsart}
\usepackage[T1]{fontenc}    
\usepackage{enumerate}
\usepackage{amssymb}
\usepackage{tikz, subfigure}
\usetikzlibrary{patterns}

\def\RR{\mathbb R}

\def\nn{\mathbb N}

\newcommand{\set}[1]{\left\lbrace #1\right\rbrace}

\newcommand{\remove}[1]{ }
\newcommand{\qtq}[1]{\quad\text{#1}\quad}

\newtheorem{theorem}{Theorem}[section]
\newtheorem{proposition}[theorem]{Proposition}%
\newtheorem{lemma}[theorem]{Lemma}

\theoremstyle{definition}

\theoremstyle{remark}

\remove{}

\numberwithin{equation}{section}
\begin{document}
\title[]{Visibility of non-self-similar sets  }
\author{Yi Cai*, Yang Yang}
\address{School of Sciences, Shanghai Institute of Technology, Shanghai 201418,
People's Republic of China}
\email{caiyi@sit.edu.cn}
\address{School of Sciences, Shanghai Institute of Technology, Shanghai 201418,
People's Republic of China}
\email{2847276260@qq.com}
\subjclass[2010]{28A80}
\thanks{*Corresponding author}

\begin{abstract}
The visible problem is
related to the arithmetic on the fractals. The visibility of self-similar set has been studied in the past. In this work, we investigate the visibility of non-self-similar sets. We begin by analyzing the structure of $F^2_\lambda$, where $F^2_{\lambda}:=\set{x^2:x\in F_{\lambda}}$ and $F_{\lambda}$ is the middle $1-2\lambda$ Cantor set, we show that it lacks self-similarity. Due to the nonlinear phenomena exhibited by $F^2_\lambda$, we develop a different approach to characterize the visible set.
Our results also reveal that the visible set may contain a closed interval within a large range of $\lambda$.

Keywords: Visibility; Visible set; Self-similar sets.
\end{abstract}
\maketitle
\section{Intorduction}

Suppose $0<\lambda<1$ and $F_{\lambda}$ is the attractor generated by
\begin{equation*}
 \left\{f_{n}(x)=\lambda x+n(1-\lambda)\right\}_{n=0}^{1}.
\end{equation*}
We note that if $\frac{1}{2}\le \lambda<1$, at this time, $F_{\lambda}=[0,1]$; if $0<\lambda<\frac{1}{2}$ then it is what we commonly know as the middle $1-2\lambda$ Cantor set, from now on we always assume that the second condition is satisfied.       

There have been abundant results regarding Cantor sets. Athreya, Reznick and Tyson \cite{ART} proved every real number between zero and one is the product of three elements of the middle third Cantor set. Baker and Kong \cite{BK} studied the structures and dimension of $F_{\lambda}\cap (F_{\lambda}+a)$ for a real number $a$. Pourbarat determined all triples $(\lambda_1,\lambda_2,a)$ such that $F_{\lambda_1}- aF_{\lambda_2}$ forms a closed interval \cite{PM}. Furthermore, he \cite{PM1} gave five possible structures for the arithmetic sum of homogeneous Cantor sets.

The problems of visibility arise from the non-trivial extension of arithmetic operations to fractal sets. Ren et al.\cite{RJ} provided a sufficient condition such that the image set of the Cartesian product of Moran set under the continuous function contains interior points. Tian et al.\cite{TJ} showed a necessary and sufficient condition such that the algebraic product of the self-similar sets with overlaps is equal to the interval $[0,1]$. For more details, we refer the readers to \cite{JJSW,OT,Re,BB}.

Since middle Cantor sets have many elegant properties, such as self-similarity, Zhang, Jiang and Li \cite{ZJL} considered the visibility of $F_{\lambda}\times F_{\lambda}$ with respect to the  lines, in this case $F_{\lambda}\times F_{\lambda}$ is self-similar. Motivated by their work, it is natural to observe the non-self-similar case. However, the structure becomes more complicated.

Let
\begin{equation}\label{D-1}
F^2_{\lambda}:=\set{x^2:x\in F_{\lambda}}\qtq{and}D:=\left\{\frac{x_1^2}{x_2}: x_1, x_2 \in F_{\lambda}\;\text{and}\; x_2 \neq 0\right\}.
\end{equation}
We call a curve $y=f(x),x\in \mathcal D\subset \mathbb R$ \emph{visible} with respect to a plane set $S\subset \mathbb R^2$ if $ \{(x,f(x)):x\in \mathcal D\}\cap S=\emptyset$. In our work, we are interested in the \emph{visibility} of set $F_{\lambda}\times F^2_{\lambda}$, i.e., we consider the lines $y=\alpha x,\alpha\ge 0, x\in \mathbb R^+$ which are visible with respect to $F_{\lambda}\times F^2_{\lambda}$. The research object is the visible set
\begin{equation*}
K:=\set{k \geq 0:k \notin D }.
\end{equation*}

\begin{theorem}\label{theorem-1} Let $F^2_{\lambda}$ and $D$ be defined in \eqref{D-1}. Then
\begin{enumerate}[\upshape (i)]

\item The set $F^2_{\lambda}$ is non-self-similar.

\item If $\frac{1}{3} \leq \lambda < \frac{1}{2}$, then the visible set $K=\emptyset$ .
\item If $\frac{2-\sqrt{2}}{2}\le \lambda <\frac{1}{2}$, then $D$ contains a closed interval.

\end{enumerate}
\end{theorem}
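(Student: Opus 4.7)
The argument divides naturally into three parts, each requiring a different tool.

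\textbf{Part (i).} The plan is a contradiction argument exploiting the canonical decomposition
\[
F^2_\lambda=\lambda^2F^2_\lambda\cup\bigl\{(\lambda t+1-\lambda)^2:t\in F_\lambda\bigr\}
\]
obtained by squaring the two cylinders of $F_\lambda$. Suppose $F^2_\lambda$ is the attractor of a similarity IFS $\{\phi_i\}$ satisfying the open set condition. The unique largest gap of $F^2_\lambda$ is $(\lambda^2,(1-\lambda)^2)$, of length $1-2\lambda$, and the two distinct second-level gaps (of lengths $\lambda^2(1-2\lambda)$ and $\lambda(2-\lambda)(1-2\lambda)$) force the contraction ratios to be exactly $\lambda^2$ and $\lambda(2-\lambda)$. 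Since $\lambda^2+\lambda(2-\lambda)=2\lambda$ matches $1-(1-2\lambda)$, the IFS must consist of just the two maps $\phi_1(x)=\lambda^2 x$ and $\phi_2(x)=\lambda(2-\lambda)x+(1-\lambda)^2$ (with the analogous check for the orientation-reversing variants). The contradiction comes from comparing $\phi_2(F^2_\lambda)=\lambda(2-\lambda)F^2_\lambda+(1-\lambda)^2$ with the genuine right piece $\{(\lambda t+1-\lambda)^2:t\in F_\lambda\}$: evaluating at $s=\lambda\in F_\lambda$ and solving the resulting quadratic for the required $t$ yields a value outside $F_\lambda$. The main obstacle is justifying the rigidity of the IFS shape, which is handled by ordering all gaps by length.

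\textbf{Part (ii).} The starting point is the scale invariance of $D$. Because $f_0(x)=\lambda x$ maps $F_\lambda$ into $F_\lambda$, one checks that $k=y^2/x\in D$ implies both $\lambda k=(\lambda y)^2/(\lambda x)\in D$ and $k/\lambda=y^2/(\lambda x)\in D$; hence $D$ is closed under multiplication by $\lambda^{\pm1}$. Consequently, if $D$ contains any closed interval $[a,b]$ with $b/a\ge 1/\lambda$, then $\bigcup_{n\in\mathbb Z}\lambda^n[a,b]=(0,\infty)$, which together with $0\in D$ gives $K=\emptyset$. The plan is therefore to produce such an interval inside the top cylinder. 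Setting $F_\lambda^{(1)}:=F_\lambda\cap[1-\lambda,1]=\lambda F_\lambda+(1-\lambda)$, I aim to show
\[
\bigl\{u^2/v:u,v\in F_\lambda^{(1)}\bigr\}=\bigl[(1-\lambda)^2,\,1/(1-\lambda)\bigr];
\]
the elementary inequality $(1-\lambda)^3\le\lambda$ on $[1/3,1/2)$ ensures the ratio $1/(1-\lambda)^3$ of this interval is at least $1/\lambda$, closing out the scaling argument. Surjectivity onto the interval is the core of the work: parametrising $u=(1-\lambda)+\lambda a$, $v=(1-\lambda)+\lambda b$ with $a,b\in F_\lambda$ and invoking $F_\lambda+F_\lambda=[0,2]$ (valid for $\lambda\ge1/3$) supplies the necessary Newhouse-type covering. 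The main obstacle is the quadratic nonlinearity in $u^2/v$: the correction terms must be dominated by the covering slack that $\lambda\ge1/3$ provides, rather than the $\lambda\ge1/4$ that would suffice for a purely linear analysis.

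\textbf{Part (iii).} The plan parallels (ii) but only requires producing \emph{some} interval in $D$. Taking logarithms, the task becomes showing that $\{2\log u-\log v:u,v\in F_\lambda^{(1)}\}$ contains an interval. Subdividing $F_\lambda^{(1)}$ into its two depth-two sub-cylinders produces four candidate sub-ranges of $u^2/v$, and the adjacency conditions needed for these sub-ranges to knit into a single interval (together with a distortion estimate controlling the logarithm on $[1-\lambda,1]$) simplify after direct algebra to the clean inequality $2(1-\lambda)^2\le1$, that is, $\lambda\ge(2-\sqrt 2)/2$. The principal obstacle is once again the curvature from squaring the numerator combined with the nonuniform stretching by the logarithm; the threshold $(2-\sqrt 2)/2$ is precisely the critical value at which this curvature stops preventing the two central sub-ranges from meeting.
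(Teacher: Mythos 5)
Your part (ii) is essentially the paper's argument (scale invariance of $D$ under $\lambda^{\pm 1}$, computing $\frac{(F_\lambda\cap[1-\lambda,1])^2}{F_\lambda\cap[1-\lambda,1]}=[(1-\lambda)^2,\frac{1}{1-\lambda}]$, then the overlap condition $(1-\lambda)^3\le\lambda$), though the surjectivity step you wave at with ``Newhouse-type covering'' is the real content: the paper's Lemma \ref{lemma2} checks that the four images of depth-one sub-cylinders overlap \emph{at every level} $j$ and for every pair $u_1\le u_2$ in $[1-\lambda,1)$, and it is exactly there that $\lambda\ge\frac13$ enters (via $u_1+2u_2(2\lambda-1)\ge0$ in the worst case $u_1\to 1-\lambda$, $u_2\to 1$). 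Parts (i) and (iii), however, have genuine gaps. In (i) you assume the hypothetical IFS satisfies the open set condition and then claim the gap lengths ``force'' it to consist of exactly the two maps with ratios $\lambda^2$ and $\lambda(2-\lambda)$. Neither is justified: self-similarity does not include OSC, a self-similar set admits many generating IFSs (with arbitrarily many, possibly overlapping, maps), and ``ordering all gaps by length'' does not pin down the number of maps or their images. This rigidity claim is precisely the hard part, and the paper avoids it entirely: it only uses that \emph{some} contraction $f(x)=rx+b$ satisfies $f(F^2_\lambda)\subset F^2_\lambda$ with $1\in f(F^2_\lambda)$ (true for any generating IFS, by taking the piece containing $1$), and then runs a case analysis on the unknown ratio $|r|$ against the decreasing sequence of gap lengths $c_n$, deriving a contradiction in each case.

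In (iii) your plan is to subdivide once into four sub-ranges and check adjacency, claiming the conditions reduce to $2(1-\lambda)^2\le 1$. That inequality is indeed equivalent to $\lambda\ge\frac{2-\sqrt2}{2}$ and matches one of the two constraints in the paper's overlap computation ($2u_1\lambda+u_2(2\lambda-1)\ge0$), but you are dropping the other one, $u_1+2u_2(2\lambda-1)\ge0$, which in the worst case forces $\lambda\ge\frac13$. A single subdivision is also not enough to conclude the image is an interval; the adjacency must propagate through all levels, and it is at deep levels (where $u_1/u_2\to 1-\lambda$) that the $\frac13$ obstruction bites. So your scheme cannot reach the range $\frac{2-\sqrt2}{2}\le\lambda<\frac13$, which is the entire point of statement (iii) beyond (ii). The paper gets below $\frac13$ by switching tools: it invokes a criterion of Li et al.\ (Proposition \ref{pro1}) for $g(F_1,F_2)$ to be an interval, based on sign conditions on $(\ell_1\partial_x+\ell_2\partial_y)^2g$ and a two-sided bound on $-\partial_yg/\partial_xg$, applied to $g(x,y)=x^2/y$ on $f_1(F_\lambda)\times f_1(F_\lambda)$; the second-derivative expressions turn out to be perfect squares and the gradient-ratio bound is where $\frac{2-\sqrt2}{2}$ appears. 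You would need either that machinery or a genuinely finer covering argument (allowing gaps at one level to be filled by images of \emph{other} cylinder pairs) to complete (iii).
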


  In the rest of paper we describe the set $F^2_{\lambda}$ in Sec.\ref{S2}, the main proofs of Theorem 1.1 are given in Sec.\ref{S3}. In the finally section, we give some remarks.

\section{Description of $F^2_{\lambda}$}\label{S2}

As is well known, $F_\lambda$ is constructed by removing the intervals
\begin{align*}
&(\lambda,1-\lambda)\quad \text{at first step;}\\
&f_0((\lambda,1-\lambda)),\quad f_1((\lambda,1-\lambda))\quad \text{at second step;}\\
&f_{00}((\lambda,1-\lambda)),\quad f_{01}((\lambda,1-\lambda))\\
&f_{10}((\lambda,1-\lambda)),\quad f_{11}((\lambda,1-\lambda))\quad \text{at third step;}\\
\cdots\\
&f_{\sigma}((\lambda,1-\lambda)),\quad \sigma\in\set{0,1}^{n-1}\quad \text{at $n$-th step;}\\
\cdots.
\end{align*}
At $k$-th construction step, we remove $2^{k-1}$ open intervals, denote by $(a_{k,i},b_{k,i})$, $i=1,2\ldots,2^{k-1}$, then we can write
\begin{equation*}
F_\lambda=[0,1]\setminus \cup_{k=1}^{\infty} \cup_{i=1}^{2^{k-1}}(a_{k,i},b_{k,i}).
\end{equation*}
Clearly, the total removed length maintains a fixed proportion $\lambda$ at each step.
The set $F^2_{\lambda}$ can be similarly constructed using an analogous approach, the only change is that the proportion of the deleted intervals is no longer a fixed value.

\begin{lemma}\label{lemma1-1}
$F^2_{\lambda}=[0,1]\setminus \cup_{k=1}^{\infty} \cup_{i=1}^{2^{k-1}}(a_{k,i}^2,b_{k,i}^2)$.

\end{lemma}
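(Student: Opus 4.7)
The plan is to exploit the fact that squaring is a homeomorphism of $[0,1]$ onto itself and simply push the known description of $F_\lambda$ through this map. Concretely, let $\phi\colon[0,1]\to[0,1]$ be defined by $\phi(x)=x^2$. Because $\phi$ is continuous, strictly increasing, and surjective onto $[0,1]$, it is a bijection (indeed a homeomorphism) from $[0,1]$ to $[0,1]$, and it carries each open subinterval $(a,b)\subset[0,1]$ onto the open interval $(a^2,b^2)$.

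Once this is set up, the argument is essentially a set-theoretic manipulation. Starting from the known decomposition
\begin{equation*}
F_\lambda=[0,1]\setminus \bigcup_{k=1}^{\infty}\bigcup_{i=1}^{2^{k-1}}(a_{k,i},b_{k,i}),
\end{equation*}
I would apply $\phi$ to both sides. Since $\phi$ is a bijection on $[0,1]$, it commutes with complementation relative to $[0,1]$ and with arbitrary unions of its images, so
\begin{equation*}
F^2_\lambda=\phi(F_\lambda)=\phi([0,1])\setminus \bigcup_{k=1}^{\infty}\bigcup_{i=1}^{2^{k-1}}\phi\bigl((a_{k,i},b_{k,i})\bigr)=[0,1]\setminus \bigcup_{k=1}^{\infty}\bigcup_{i=1}^{2^{k-1}}(a_{k,i}^2,b_{k,i}^2),
\end{equation*}
which is exactly the claim. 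If one prefers a double-inclusion presentation, the forward direction observes that if $y=x^2$ with $x\in F_\lambda$, then $x$ lies in no removed interval $(a_{k,i},b_{k,i})$, and since $\phi$ is strictly increasing on $[0,1]$, $y$ lies in no $(a_{k,i}^2,b_{k,i}^2)$; conversely, any $y\in[0,1]$ outside all $(a_{k,i}^2,b_{k,i}^2)$ has a unique preimage $x=\sqrt{y}\in[0,1]$ which, by the same monotonicity, avoids every $(a_{k,i},b_{k,i})$ and hence belongs to $F_\lambda$.

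There is no serious obstacle here; the only technical point worth stating explicitly is the monotonicity of $\phi$ on $[0,1]$, which is what guarantees both that the image of an open interval is again an open interval with the squared endpoints and that no two disjoint removed intervals of $F_\lambda$ collide after squaring. (The restriction $0\le a_{k,i}<b_{k,i}\le 1$ is automatic from $F_\lambda\subset[0,1]$, so no sign ambiguity arises.) Thus the lemma reduces to transporting the standard gap decomposition of $F_\lambda$ through the homeomorphism $x\mapsto x^2$.
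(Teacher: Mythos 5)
Your argument is correct and is essentially the paper's own proof: both rest on the strict monotonicity of $x\mapsto x^2$ on $[0,1]$, which sends each gap $(a_{k,i},b_{k,i})$ to $(a_{k,i}^2,b_{k,i}^2)$ and preserves membership in the complement in both directions. The paper just writes out the double-inclusion version that you sketch at the end, so there is nothing to add.
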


\begin{proof}
Suppose that $y=x^2\in F^2_{\lambda}$, where $x\in F_\lambda$. By the structure of $ F_\lambda$, we see $x\notin \cup_{k=1}^{\infty} \cup_{i=1}^{2^{k-1}}(a_{k,i},b_{k,i})$, i.e., $x\notin(a_{k,i},b_{k,i})$ for any $k,i$.
Fix a gap interval $(a_{k,i},b_{k,i})$ arbitrarily, we have either $x<a_{k,i}$ or $x>b_{k,i}$, thus $y<a_{k,i}^2$ or $y>b_{k,i}^2$. So $y\notin (a_{k,i}^2,b_{k,i}^2)$ for any $k,i$, we obtain $F^2_{\lambda}\subset [0,1]\setminus \cup_{k=1}^{\infty} \cup_{i=1}^{2^{k-1}}(a^2_{k,i},b^2_{k,i})$.

Inversely, let $x^2\in [0,1]\setminus \cup_{k=1}^{\infty} \cup_{i=1}^{2^{k-1}}(a_{k,i}^2,b_{k,i}^2)$, it remains to prove that $x\in F_\lambda$ and then $x^2\in F^2_{\lambda}$. Clearly, for any $n\in\nn^+$ we have $x^2\in [0,1]\setminus \cup_{k=1}^{n}\cup_{i=1}^{j}(a_{k,i}^2,b_{k,i}^2)$. 
Then $x\in [0,1]\setminus \cup_{k=1}^{n}\cup_{i=1}^{j}(a_{k,i},b_{k,i})$, therefore $x\in F_\lambda$.

\end{proof}

Although $F_\lambda$ is self-similar by construction, we prove that $F_\lambda^2$ does not preserve self-similarity.
\begin{lemma}\label{lemma1-2}
$F^2_{\lambda}$ is not a self-similar set.
\end{lemma}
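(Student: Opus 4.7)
My plan is to argue by contradiction: suppose $F^2_\lambda$ is the attractor of an IFS of contracting similarities $\{g_i(x)=\varepsilon_i r_i x+c_i\}_{i=1}^N$ on $\mathbb R$ with $r_i\in(0,1)$. Using the gap structure provided by Lemma~\ref{lemma1-1}, I first observe that $(\lambda^2,(1-\lambda)^2)$ of length $1-2\lambda$ is strictly the largest gap of $F^2_\lambda$: every other gap is the image under squaring of a level-$\ge 2$ gap of $F_\lambda$, which sits inside an interval of length at most $\lambda$, so its image under squaring has length strictly less than $1-2\lambda$. Since internal gaps of any $g_i(F^2_\lambda)$ have length at most $r_i(1-2\lambda)<1-2\lambda$, this main gap cannot be absorbed inside a single similarity copy; the $g_i(F^2_\lambda)$'s therefore split into two non-empty sub-collections with convex hulls in $[0,\lambda^2]$ and $[(1-\lambda)^2,1]$ respectively. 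The right component $R:=F^2_\lambda\cap[(1-\lambda)^2,1]=\{(1-\lambda+\lambda y)^2:y\in F_\lambda\}$ is then a finite union of similarity copies of $F^2_\lambda$.

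Next I exploit the extreme point $1\in R$: pick a similarity $g_{j_0}$ in the IFS whose image contains $1$. In the orientation-preserving case (the reversing case is analogous), this gives $g_{j_0}(x)=rx+(1-r)$ for some $r\in(0,\lambda(2-\lambda)]$, and the inclusion $g_{j_0}(F^2_\lambda)\subseteq F^2_\lambda$ translates to the requirement that $\sqrt{rx^2+1-r}\in F_\lambda$ for every $x\in F_\lambda$. Setting $p:=\sqrt{1-r}\in F_\lambda$, this is precisely the statement that the strictly increasing, genuinely nonlinear $C^\infty$ function
\[
\psi(x)=\sqrt{(1-p^2)x^2+p^2}
\]
sends $F_\lambda$ into $F_\lambda$, with $\psi(0)=p$, $\psi(1)=1$, $\psi'(0)=0$ and $\psi'(1)=r$.

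The contradiction then comes from the mismatch between the quadratic flatness of $\psi$ at $0$ and the linear local scaling of $F_\lambda$ at $p$. A Taylor expansion gives
\[
\psi(\lambda^k)=p+\frac{1-p^2}{2p}\lambda^{2k}-\frac{(1-p^2)^2}{8p^3}\lambda^{4k}+O(\lambda^{6k}),
\]
so the points $\psi(\lambda^k)\in F_\lambda$ accumulate at $p$ from above at the quadratic rate $\lambda^{2k}$. On the other hand, the self-similarity of $F_\lambda$ forces the local structure of $F_\lambda$ at $p$ to be (up to at most a reflection) an affine copy of $F_\lambda$ at some fixed scale $\lambda^n$, so points of $F_\lambda$ above $p$ have distances to $p$ decaying linearly in powers of $\lambda$. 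Matching the leading term $\frac{1-p^2}{2p}\lambda^{2k}$ against such a linearly-decaying sequence already forces $(1-p^2)/(2p)$ to be a power of $\lambda$, and imposing the analogous condition on the next-order $\lambda^{4k}$ term produces an algebraic identity in $\lambda$ with no solution in $(0,1/2)$. I expect the delicate step to be this last matching, since leading-order matching alone can be arranged by tuning $p$; a cleaner backup I would use is the direct comparison of ratios: the two sub-components of $R$ separated by its largest gap have lengths in ratio $(2-2\lambda+\lambda^2):(2-\lambda^2)$, which never equals the corresponding ratio $\lambda:(2-\lambda)$ of $F^2_\lambda$ for $\lambda\in(0,1/2)$, so $R$ is not a single similarity copy, and an iteration of this observation at all scales forbids $R$ from being any finite union of similarity copies of $F^2_\lambda$.
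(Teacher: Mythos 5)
Your setup is sound and parallels the paper's: reduce to a single similarity $g_{j_0}$ fixing the extreme point $1$ with ratio $r\le\lambda(2-\lambda)$, using the fact that the largest gap $(\lambda^2,(1-\lambda)^2)$ cannot sit inside any contracted copy. The reformulation as $\psi(F_\lambda)\subseteq F_\lambda$ with $\psi(x)=\sqrt{(1-p^2)x^2+p^2}$ and the ratio computation for $R$ are also correct. But neither of your two closing arguments actually closes. In the main line, the assertion that \emph{``the self-similarity of $F_\lambda$ forces the local structure of $F_\lambda$ at $p$ to be an affine copy of $F_\lambda$ at some fixed scale $\lambda^n$''} is false for a general $p\in F_\lambda$: it holds only when $p$ is the left endpoint of a basic interval (coding eventually all zeros), and $p=\sqrt{1-r}$ is a priori an arbitrary point of $F_\lambda\cap[1-\lambda,1)$. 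What self-similarity does give is only that any $q\in F_\lambda$ with $q>p$ satisfies $(1-2\lambda)\lambda^{m}\le q-p\le\lambda^{m}$ for the largest $m$ with $q$ in the rank-$m$ basic interval of $p$; this two-sided bound with a multiplicative slack is perfectly consistent with $\psi(\lambda^k)-p\sim C\lambda^{2k}$ (take $m\approx 2k$), so no contradiction follows and the ``matching of the $\lambda^{2k}$ and $\lambda^{4k}$ terms'' is never actually performed. You flag this yourself as the delicate step; it is in fact the whole proof.

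The backup has a different gap: the IFS structure does \emph{not} say that $R=F^2_\lambda\cap[(1-\lambda)^2,1]$ is a single similarity copy, only that the copy containing $1$ has convex hull $[1-r,1]$ with $r$ possibly strictly smaller than $\lambda(2-\lambda)$, in which case your ratio comparison for $R$ says nothing about it. ``An iteration of this observation at all scales'' is not an argument: to make it one, you must determine, for each admissible $r$, which gap of $F^2_\lambda$ is forced to be the largest gap of $g_{j_0}(F^2_\lambda)$, and then derive an equation for $r$. That is exactly what the paper does: it introduces the decreasing sequence $c_n=(1-\lambda^n)^2-(1-(1-\lambda)\lambda^{n-1})^2$ of lengths of the rightmost gaps of $F^2_\lambda$ accumulating at $1$, shows that if $|r|$ lies strictly between $c_{n+1}/(1-2\lambda)$ and $c_n/(1-2\lambda)$ then the copy must contain the $n$-th such gap as an internal gap of length $\ge c_n$ while its largest gap has length $|r|(1-2\lambda)<c_n$ (a contradiction), and in the boundary case $|r|=c_n/(1-2\lambda)$ solves the explicit equation matching $(\lambda^2,(1-\lambda)^2)$ to that gap and finds no admissible $\lambda\in(0,1/2)$. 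You would need to supply an argument of this kind (or a genuinely complete local analysis at $p$) for your proof to stand.
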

\begin{proof}
Suppose that $F^2_{\lambda}$ is self-similar,  there exists a $ f(x)=rx+b$ with $0<|r|<1,b\in \RR$ such that
\begin{equation*}
  f(F^2_{\lambda})\subset F^2_{\lambda} \qtq{and} 1\in f(F^2_{\lambda}).
\end{equation*}
The above relations imply that the $conv(f(F^2_{\lambda}))=[b,b+r]=[b,1]$ or $conv(f(F^2_{\lambda}))=[b+r,b]=[b+r,1]$.

Let
\begin{equation*}
   c_n:=(1-\lambda^n)^2 - \left(1-(1-\lambda)\lambda^{n-1}\right)^2,\quad n\in \mathbb N^+
\end{equation*}
and
\begin{equation*}
   R:=\set{\frac{c_n}{1-2\lambda}:n=1,2,\cdots }.
\end{equation*}
We have $c_n$ is decreasing, because $c_n=(1-2\lambda)\lambda^{n-1}(2-\lambda^{n-1})$ and
\begin{equation*}
\frac{c_n}{c_{n+1}}=\frac{2-\lambda^{n-1}}{\lambda(2-\lambda^{n})}.
\end{equation*}
Since $0<\lambda<\frac{1}{2}$ we have
\begin{equation*}
\lambda(\lambda^{n}-\lambda^{n-2}-2)+2>0\Rightarrow \frac{2-\lambda^{n-1}}{\lambda(2-\lambda^{n})}>1.
\end{equation*}
So $\frac{c_n}{c_{n+1}}>1$. We split the proof into two cases.

Case 1. $|r|\notin R$. Then there exists a positive integer $n$ such that $\frac{c_{n+1}}{1-2\lambda}<|r|<\frac{c_{n}}{1-2\lambda}$. Thus
\begin{equation*}
diam f(F^2_{\lambda})>\frac{c_{n+1}}{1-2\lambda}=\lambda^{n}(2-\lambda^{n})=1^2-(1-\lambda^n)^2.
\end{equation*}
Since the largest element of $f(F^2_{\lambda})$ is $1$, so the smallest element of $f(F^2_{\lambda})$ is less than $(1-\lambda^n)^2$. Note that
\begin{equation*}
 (1-\lambda^{n}-(1-2\lambda)\lambda^{n-1},1-\lambda^{n})=\left(1-(1-\lambda)\lambda^{n-1}, 1-\lambda^n\right)
\end{equation*}
is in the gap intervals of $F_{\lambda}$, thus $\left((1-(1-\lambda)\lambda^{n-1})^2, (1-\lambda^n)^2\right)$ is in the gap intervals of $F^2_{\lambda}$ by Lemma \ref{lemma1-1}. It follows from $f(F^2_{\lambda})\subset F^2_{\lambda}$ and $\min f(F^2_{\lambda})< (1-\lambda^n)^2$ that $\left((1-(1-\lambda)\lambda^{n-1})^2, (1-\lambda^n)^2\right)$ is in the  gap intervals of $f(F^2_{\lambda})$.

We show that the largest length interval in gap intervals
 of $F^2_{\lambda}$ is $\left(\lambda^2, (1-\lambda)^2\right)$. Recall that $(a_{k,i},b_{k,i}),i=1,2\ldots,2^{k-1}$ the removed intervals at $k$ step in the construction of $F_\lambda$, where $a_{k,1}<a_{k,2}<\cdots<a_{k,2^{k-1}}$ and $b_{k,1}<b_{k,2}<\cdots<b_{k,2^{k-1}}$. Since $b_{k,i}-a_{k,i}=b_{k,j}-a_{k,j}$ for any $1\le i,j\le 2^{k-1}$ at $k$ step, then we have
 \begin{equation*}
   b_{k,1}^2-a_{k,1}^2<b_{k,2}^2-a_{k,2}^2<\cdots<b_{k,2^{k-1}}^2-a_{k,2^{k-1}}^2.
 \end{equation*}
 Therefore, at $k$ step the largest length interval in  gap intervals
 of $F^2_{\lambda}$ is
 \begin{equation*}
  (a_{k,2^{k-1}}^2,b_{k,2^{k-1}}^2)=\left((1-(1-\lambda)\lambda^{k-1})^2, (1-\lambda^k)^2\right)
 \end{equation*}
and $b_{k,2^{k-1}}^2- a_{k,2^{k-1}}^2=c_k$. Hence the largest length interval in gap intervals
 of $F^2_{\lambda}$ is $(a_{1,1}^2,b_{1,1}^2)=\left(\lambda^2, (1-\lambda)^2\right)$, because $c_k$ is decreasing.

Denote by $G$ the largest length interval in gap intervals of $ f(F^2_{\lambda})$, and by $|G|$ the length of interval $G$.
So $ |G|\ge c_n$.
Secondly, the length of the largest interval in gap intervals of $F^2_\lambda$ is $1-2\lambda$ and $|r|<\frac{c_n}{1-2\lambda}$, which means that $|G|\le (1-2\lambda)|r|<c_n$, which leads to a contradiction.

Case 2. $|r|=\frac{c_{n}}{1-2\lambda}\in R$ for some positive integer $n$. Since $1\in f(F^2_{\lambda})$ and $f(F^2_{\lambda})\subset F^2_{\lambda}\cap [(1-\lambda^{n-1})^2,1]$. Then we have
\begin{equation*}
 G=\left((1-\lambda^{n-1}+\lambda^{n})^2, (1-\lambda^n)^2\right),
\end{equation*}
and hence $1,(1-(1-\lambda)\lambda^{n-1})^2, (1-\lambda^n)^2\in f(F^2_\lambda)$. Since the largest length interval in gap intervals
 of $F^2_{\lambda}$ is $\left(\lambda^2, (1-\lambda)^2\right)$, thus if $r>0$
\begin{equation*}
 f(1)=1, \quad f(\lambda^2)=(1-\lambda^{n-1}+\lambda^{n})^2.
\end{equation*}
Then
\begin{align*}
&b=1-r,\quad \lambda^2 r+b=(1-\lambda^{n-1}+\lambda^{n})^2\\
\Rightarrow &  r(\lambda^{2}-1)+1=(1-\lambda^{n-1}+\lambda^{n})^2\\
\Rightarrow &  r=\frac{(1-\lambda^{n-1}+\lambda^{n})^2-1}{\lambda^{2}-1}=\frac{\lambda^{n-1}(2+\lambda^{n}-\lambda^{n-1})}{\lambda+1}\\
\Rightarrow & \lambda^{n-1}(2-\lambda^{n-1})=\frac{\lambda^{n-1}(2+\lambda^{n}-\lambda^{n-1})}{\lambda+1}.
\end{align*}
By the last equation $\lambda$ has no solution in $(0,1/2)$ when $n\ge2$, and $\lambda$ can take any value in $(0,1/2)$ when $n=1$. If $n=1$ then $r=1$, this is impossible.
If $r<0$, similarly
\begin{equation*}
 f(0)=1, f((1-\lambda)^2)=(1-\lambda^{n-1}+\lambda^{n})^2,
\end{equation*}
this is also impossible.

\end{proof}

\section{Proof of main results }\label{S3}

We first introduce some notations. We call $f_{\sigma}([0,1])=(f_{i_1}\circ\cdots \circ f_{i_n})([0,1])$ a basic interval of rank $n$, where $\sigma=i_1\cdots i_n\in\set{0,1}^n$.
Define
\begin{equation*}
\mathcal I_n:=\set{I: \text{$I$ is a basic interval of rank $n$}},
\end{equation*}
and for any $I\in \mathcal I_n$, let $I'$ be the union of all basic intervals of rank $n+1$ contained in $I$. Let $L$ and $R$ be the left and right endpoints of some basic intervals in $\mathcal I_k$ for some $k\geq 1$. Denote by $U_n$ the union of all the
basic intervals of rank $n$ which are contained in $[L,R]$. Now we introduce a technical lemma proved in \cite[lemma 2.1]{TJ}(see also \cite{ART}):
\begin{proposition}[\cite{TJ}]\label{lemma1}
 Suppose that $\phi: \mathbb{R}^2 \rightarrow \mathbb{R}$ is a continuous function. Let $L$ and $R$ be the left and right endpoints of some basic intervals in $\mathcal I_k$ for some $k\geq 1$. If $\phi(I, J)=\phi(I', J')$ for any $I, J \in \mathcal I_{n}$ and any $n \geq k$, then
\begin{equation*}
 \phi\left(F_\lambda \cap[L, R], F_\lambda \cap[L, R]\right)=\phi\left(U_k, U_k\right).
\end{equation*}

\end{proposition}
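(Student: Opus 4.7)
The plan is to prove the two inclusions of $\phi\left(F_\lambda \cap[L, R], F_\lambda \cap[L, R]\right)=\phi\left(U_k, U_k\right)$ separately. Since $L$ and $R$ are endpoints of basic intervals in $\mathcal I_k$, every basic interval of rank $k$ that meets $(L,R)$ is entirely contained in $[L,R]$, so $F_\lambda\cap [L,R]\subseteq U_k$ and the inclusion $\phi(F_\lambda\cap [L,R],F_\lambda\cap [L,R])\subseteq \phi(U_k,U_k)$ is immediate. The substance lies in proving the reverse.

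For $\supseteq$, I would fix $(u,v)\in U_k\times U_k$ and construct inductively, for each $n\ge 0$, points $u_n,v_n$ together with basic intervals $I_n,J_n\in \mathcal I_{k+n}$ such that $u_n\in I_n$, $v_n\in J_n$, $I_{n+1}\subseteq I_n$, $J_{n+1}\subseteq J_n$, and $\phi(u_n,v_n)=\phi(u,v)$. The base case is immediate: by the definition of $U_k$, $u$ (resp.\ $v$) lies in some rank-$k$ basic interval $I_0$ (resp.\ $J_0$) contained in $[L,R]$. For the inductive step, the hypothesis $\phi(I_n,J_n)=\phi(I_n',J_n')$ guarantees that the value $\phi(u_n,v_n)$ is attained on $I_n'\times J_n'$; choosing such a witness $(u_{n+1},v_{n+1})$ and letting $I_{n+1},J_{n+1}\in \mathcal I_{k+n+1}$ be the rank-$(k+n+1)$ basic intervals containing $u_{n+1},v_{n+1}$ closes the recursion.

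To finish, I pass to the limit. Since $|I_n|=\lambda^{k+n}\to 0$ and the $I_n$ are nested compact intervals, $\bigcap_n I_n=\{x\}$ for some $x$, and by the standard description of $F_\lambda$ as an intersection of nested basic intervals, $x\in F_\lambda$; similarly $\bigcap_n J_n=\{y\}$ with $y\in F_\lambda$. Moreover $u_n\to x$, $v_n\to y$, and $I_0\subseteq [L,R]$ forces $(x,y)\in (F_\lambda\cap [L,R])^2$. Continuity of $\phi$ then yields $\phi(x,y)=\lim_{n\to\infty}\phi(u_n,v_n)=\phi(u,v)$, as required. The main point to keep an eye on is the inductive step: $I_n'$ and $J_n'$ are each a disjoint union of two rank-$(k+n+1)$ basic intervals, so there is an implicit choice of which to pick for $I_{n+1}$ and $J_{n+1}$, but any consistent choice works since all we need is the existence of \emph{some} nested refinement preserving the $\phi$-value. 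Beyond that, the argument is a routine nested-interval compactness argument combined with continuity, the hypothesis on $\phi$ doing exactly the work of ensuring that attained values survive refinement to arbitrarily fine scales where the Cantor set lives.
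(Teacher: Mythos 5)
Your argument is correct, and it is essentially the standard proof of this lemma (the paper itself does not prove it but imports it from \cite{TJ}, where the argument is exactly this iteration-plus-compactness scheme: the hypothesis propagates attained values down through successive refinements $U_k, U_{k+1},\dots$, and nested compact intervals of shrinking diameter together with continuity of $\phi$ pass the value to a point of $F_\lambda\cap[L,R]=\bigcap_n U_n$). Both inclusions are handled correctly, including the point that $L,R$ being endpoints of rank-$k$ basic intervals forces $F_\lambda\cap[L,R]\subseteq U_k$, so there is nothing to add.
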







\begin{lemma}\label{lemma2} Let $ \Lambda(x, y) = \frac{x^2}{y}$ and $ J_1 = [u_1, u_1 + \lambda^j]  , J_2 = [u_2, u_2 + \lambda^j] \in \mathcal I_j$ for some $j\ge1$ with $u_2+\lambda \geq u_1+\lambda \ge 1$. If $\frac{1}{3} \leq \lambda < \frac{1}{2} $, we have $ \Lambda(J_1, J_2)= \Lambda(J_1', J_2')$.
\end{lemma}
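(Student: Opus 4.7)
The plan is to establish both inclusions of $\Lambda(J_1,J_2)=\Lambda(J_1',J_2')$; the direction $\supseteq$ is immediate from $J_1'\times J_2'\subseteq J_1\times J_2$, so the substance is the reverse. Because $\Lambda(x,y)=x^2/y$ is strictly increasing in $x$ and strictly decreasing in $y$ on the positive quadrant, its image on any product of positive closed intervals is a closed interval whose endpoints occur at diagonally opposite corners. This yields
\[
\Lambda(J_1,J_2)=\left[\frac{u_1^2}{u_2+\lambda^j},\,\frac{(u_1+\lambda^j)^2}{u_2}\right].
\]
Setting $a:=\lambda^{j+1}$ and $b:=\lambda^j-\lambda^{j+1}$ (so $a+b=\lambda^j$ and $b-a=(1-2\lambda)\lambda^j$), the product $J_1'\times J_2'$ splits into four closed sub-rectangles $R_{pq}=I_1^p\times I_2^q$, $p,q\in\{L,R\}$, with $I_i^L=[u_i,u_i+a]$ and $I_i^R=[u_i+b,u_i+\lambda^j]$. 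Each $\Lambda(R_{pq})$ is a closed interval $[m_{pq},M_{pq}]$, and the goal reduces to showing that these four intervals chain together to fill $[m_{LR},M_{RL}]$.

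I would first order the four left endpoints. It is immediate that $m_{LR}$ is the smallest and $m_{RL}$ the largest; the comparison $m_{LL}\le m_{RR}$ reduces after clearing denominators to $u_1^2\le(2u_1+b)(u_2+a)$, which follows from the hypothesis $u_2\ge u_1$ (the content of $u_2+\lambda\ge u_1+\lambda$). With the ordering $m_{LR}\le m_{LL}\le m_{RR}\le m_{RL}$ in hand, coverage is equivalent to three successive overlap conditions: $M_{LR}\ge m_{LL}$ (call it (a)), $M_{LL}\ge m_{RR}$ (call it (b)), and $M_{RR}\ge m_{RL}$ (call it (c)). Conditions (a) and (c) are relatively mild: (a) clears to $(2u_1+a)(u_2+a)\ge u_1^2(1-2\lambda)/\lambda$, which follows from $u_2\ge u_1$ together with $(1-2\lambda)/\lambda\le 1$ (equivalent to $\lambda\ge 1/3$); (c) reduces, after extracting a factor and using $u_2\ge u_1$, to the elementary bound $u_1\ge b$, which is immediate from $u_1\ge 1-\lambda>\lambda(1-\lambda)\ge b$.

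The main obstacle is (b), which chains across \emph{both} gaps simultaneously. Clearing denominators and factoring out $\lambda^j$, (b) becomes
\[
(u_1+a)^2\ge(1-2\lambda)(2u_1+\lambda^j)u_2.
\]
The partial derivatives in $u_1$ and $u_2$ show that the deficit is minimized at the corner $(u_1,u_2)=(1-\lambda,\,1-\lambda^j)$ (the smallest admissible $u_1$ and the largest admissible $u_2$, which satisfies $u_2\ge u_1$ automatically as $\lambda^j\le\lambda$). Substituting there and expanding as a polynomial in $x:=\lambda^j$ produces
\[
g(x)=(1-\lambda)(3\lambda-1)+(2\lambda^2-2\lambda+1)\,x+(1-\lambda)^2\,x^2.
\]
The $x$- and $x^2$-coefficients are strictly positive for every $\lambda\in(0,1)$, while the constant term $(1-\lambda)(3\lambda-1)$ is non-negative precisely when $\lambda\ge 1/3$, vanishing at the boundary. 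Hence $g(x)\ge 0$ throughout the admissible range, which establishes (b). The same calculation makes transparent why the hypothesis $\lambda\ge 1/3$ is sharp: for $\lambda<1/3$ the constant term is negative and (b) collapses as $j\to\infty$. Combining (a), (b), (c) gives the required chaining of the four sub-images, completing the proof.
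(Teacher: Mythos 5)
Your proof is correct and follows essentially the same route as the paper: the same decomposition of $J_1'\times J_2'$ into four sub-rectangles whose images $[l_1,r_1],\dots,[l_4,r_4]$ are chained together via the overlaps you call (a), (b), (c), with (b) (the paper's $r_2\ge l_3$) as the critical one. The only difference is in the final verification of (b): the paper groups the expanded numerator into two blocks, each nonnegative for $\lambda\ge\frac{1}{3}$, whereas you reduce to $(u_1+\lambda^{j+1})^2\ge(1-2\lambda)(2u_1+\lambda^j)u_2$, minimize at the corner $(1-\lambda,1-\lambda^j)$, and obtain a quadratic in $\lambda^j$ with nonnegative coefficients, which has the added benefit of exhibiting the sharpness of the threshold $\frac{1}{3}$.
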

\begin{proof} First, by the definition we obtain the offspring sets of $J_1,J_2$
\begin{align*}
&J_1' = [u_1, u_1 + \lambda^{j+1} ] \cup [u_1 + \lambda^{j} - \lambda^{j+1}, u_1 + \lambda^{j}],\\
&J_2' = [u_2, u_2 + \lambda^{j+1}] \cup [u_2 +  \lambda^{j}- \lambda^{j+1}, u_2 + \lambda^{j}],
\end{align*}
and then $\Lambda(J_1', J_2')$ is the union of the following sets:
\begin{align*}
&[l_1,r_1] = \left[ \frac{u_1^2}{u_2 +  \lambda^{j}}, \frac{(u_1 +  \lambda^{j+1})^2}{u_2 +  \lambda^{j} -  \lambda^{j+1}} \right],\\
&[l_2,r_2]  = \left[ \frac{u_1^2}{u_2 +  \lambda^{j+1}}, \frac{(u_1 +  \lambda^{j+1})^2}{u_2} \right],\\
&[l_3,r_3]  = \left[ \frac{(u_1 +  \lambda^{j} -  \lambda^{j+1})^2}{u_2 +  \lambda^{j}}, \frac{(u_1 +  \lambda^{j})^2}{u_2 +  \lambda^{j} - \lambda^{j+1}} \right],\\
&[l_4,r_4]  = \left[ \frac{(u_1 +  \lambda^{j} -  \lambda^{j+1})^2}{u_2 +  \lambda^{j+1}}, \frac{(u_1 +  \lambda^{j})^2}{u_2} \right].
\end{align*}
Clear, $l_1<l_2,r_1<r_2$, $l_3<l_4$ and $r_3<r_4$. Now we prove that $\Lambda(J_1', J_2')=\Lambda(J_1, J_2)$.


Since $t>(1-\lambda) t>\lambda t>0$, it can be verified that when $\frac{1}{3} \leq \lambda < \frac{1}{2}$
\begin{align*}
 &\frac{u_1 +\lambda^{j+1} }{u_2 + \lambda^{j}  - \lambda^{j+1} }\ge \frac{u_1}{u_2 + \lambda^{j+1} } \Rightarrow  r_1\ge l_2\\
 &\frac{u_1 + \lambda^{j}  - \lambda^{j+1} }{u_2 + \lambda^{j} }\ge \frac{u_1}{u_2 +\lambda^{j+1} } \Rightarrow
l_3\ge l_2\\
&\frac{u_1 + \lambda^{j} }{u_2 + \lambda^{j}  -\lambda^{j+1} }\ge  \frac{u_1 + \lambda^{j}  - \lambda^{j+1} }{u_2 + \lambda^{j+1} }  \Rightarrow   r_3\ge l_4.
\end{align*}
We consider $r_2-l_3$
\begin{align*}
 &\frac{(u_1 + \lambda^{j+1})^2}{u_2}- \frac{(u_1 +\lambda^{j} -\lambda^{j+1})^2}{u_2 + \lambda^{j}}\\
 &=\frac{\lambda^{j} \left[ u_1^2 + 2u_1 u_2 (2 \lambda - 1) + \left(2u_1 \lambda + u_2 (2 \lambda - 1)\right) \lambda^{j} +\lambda^{2j+2} \right]}{u_2(u_2 + \lambda^{j})}.
\end{align*}
If $u_1^2 + 2u_1 u_2 (2 \lambda - 1)\ge0$ and $2u_1 \lambda + u_2 (2 \lambda - 1)\ge0$, then
\begin{align*}
 &u_1 + 2 u_2 (2 \lambda - 1)\ge 0 \iff  \lambda\ge \frac{1}{2}-\frac{u_1}{4u_2}\\
 &2u_1 \lambda + u_2 (2 \lambda - 1)\ge0 \iff  \lambda\ge \frac{u_2}{2(u_1+u_2)}=\frac{1}{2(u_1/u_2+1)}.
\end{align*}
It follows from $ 1>u_2 \geq u_1 \ge 1-\lambda$ that $1- \lambda \le \frac{u_1}{u_2}\le 1$. So the inequalities hold: for $ \lambda\ge \frac{1}{3}$ we have $u_1 + 2 u_2 (2 \lambda - 1)\ge 0$; for $ \lambda\ge \frac{2-\sqrt{2}}{2}$ we obtain $2u_1 \lambda + u_2 (2 \lambda - 1)\ge0$. Consequently, when $\lambda\ge \frac{1}{3}$ the relation $r_2>l_3$ holds. Since $\Lambda(J_1, J_2)=[l_1,r_4]$, we deduce that $\Lambda(J_1', J_2')=[l_1,r_4]=\Lambda(J_1, J_2)$.



\end{proof}

\begin{lemma}\label{lemma3}
Let $D$  be defined in \eqref{D-1}. If $\frac{1}{3} \leq \lambda < \frac{1}{2}$ then
$
D=[0, +\infty).
$
\end{lemma}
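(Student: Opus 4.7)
The plan is to combine Lemma \ref{lemma2} with Proposition \ref{lemma1} to locate a closed interval inside $D$, and then promote it to all of $[0,+\infty)$ by exploiting the scaling invariance of $D$ inherited from the self-similarity of $F_\lambda$.

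First I would apply Proposition \ref{lemma1} to $\phi = \Lambda$ with $L = 1-\lambda$, $R = 1$, and $k = 1$, so that $U_1 = [1-\lambda, 1]$ is the right rank-one basic interval. Lemma \ref{lemma2} supplies the required equality $\Lambda(I, J) = \Lambda(I', J')$ for every pair of rank-$n$ basic intervals $I, J$ sitting inside $[1-\lambda, 1]$ (the companion case $u_1 \ge u_2$ follows from the same computation with the roles of $u_1$ and $u_2$ interchanged, and the sufficient sign conditions there become even easier when $u_1/u_2 \ge 1$). The proposition then yields
\begin{equation*}
\Lambda\bigl(F_\lambda \cap [1-\lambda,1],\, F_\lambda \cap [1-\lambda,1]\bigr) \;=\; \Lambda\bigl([1-\lambda,1],\,[1-\lambda,1]\bigr).
\end{equation*}
Since $(x,y) \mapsto x^2/y$ is monotone in each variable on $[1-\lambda,1]^2$, attaining its extrema at the corners $(1-\lambda,1)$ and $(1,1-\lambda)$, the right-hand side is the closed interval $[(1-\lambda)^2,\, 1/(1-\lambda)]$. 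This gives the inclusion $[(1-\lambda)^2,\, 1/(1-\lambda)] \subset D$.

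Next I would use the self-similar identity $F_\lambda = \lambda F_\lambda \cup (\lambda F_\lambda + 1-\lambda)$, from which $x \in F_\lambda \Rightarrow \lambda x \in F_\lambda$. Thus for any $k = x_1^2/x_2 \in D$ we also have
\begin{equation*}
\lambda^2 k = \frac{(\lambda x_1)^2}{x_2} \in D, \qquad \frac{k}{\lambda} = \frac{x_1^2}{\lambda x_2} \in D,
\end{equation*}
so $D$ is invariant under multiplication by $\lambda^m$ for every $m \in \ZZ$. Consequently
\begin{equation*}
\bigcup_{m \in \ZZ} \lambda^m \bigl[(1-\lambda)^2,\; 1/(1-\lambda)\bigr] \;\subset\; D.
\end{equation*}

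Finally, I would verify that consecutive intervals in this union overlap. The condition $\lambda^{m+1}/(1-\lambda) \ge \lambda^m (1-\lambda)^2$ reduces to $\lambda \ge (1-\lambda)^3$; at $\lambda = 1/3$ this reads $9/27 \ge 8/27$, and since $\lambda - (1-\lambda)^3$ is increasing in $\lambda$, it persists for every $\lambda \in [1/3, 1/2)$. Letting $m \to \pm\infty$ the union fills out $(0,+\infty)$, and $0 = 0^2/x_2 \in D$ for any nonzero $x_2 \in F_\lambda$, giving $D = [0,+\infty)$. The main obstacle is the first step, in which Proposition \ref{lemma1} is invoked via Lemma \ref{lemma2}; once the interval $[(1-\lambda)^2, 1/(1-\lambda)]$ is in hand, the rest is routine covering arithmetic driven by the scaling symmetry.
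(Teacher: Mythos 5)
Your proposal is correct and follows essentially the same route as the paper: both use Proposition \ref{lemma1} together with Lemma \ref{lemma2} to obtain $\Lambda\bigl(f_1(F_\lambda),f_1(F_\lambda)\bigr)=[(1-\lambda)^2,\,1/(1-\lambda)]$, then tile $(0,+\infty)$ with the $\lambda^m$-scaled copies and check the overlap condition $\lambda\ge(1-\lambda)^3$ (equivalently $\lambda^3-3\lambda^2+4\lambda-1\ge0$), which holds for $\lambda\ge 1/3$. Your remark on the companion case $u_1\ge u_2$ of Lemma \ref{lemma2} is a welcome addition, since the paper invokes the proposition without explicitly addressing that case.
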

\begin{proof}

As an consequence of Proposition \ref{lemma1} and Lemma \ref{lemma2} is
\begin{equation*}
 \frac{f_1^2(F_{\lambda})}{f_1(F_{\lambda})} = \left[(1 - \lambda)^2, \frac{1}{1 - \lambda}\right]\qtq{for $\frac{1}{3}\le \lambda < \frac{1}{2}$.}
\end{equation*}
The self-similarity of $F_{\lambda}$ ensures the scaling relation for any nonzero $c\in F_{\lambda}$,  we have the form
\begin{equation*}
\frac{c}{c'} = \lambda^k \quad \text{for some $k \in \nn_0$ and some $c' \in f_1(F_{\lambda}$)}.
\end{equation*}
Consider any nonzero elements $c_1 = \lambda^n c_1'\in F_{\lambda}$ and $c_2 = \lambda^m c_2' \in F_{\lambda}$ where $m,n\in\nn_0$ and $c_1', c_2' \in f_1(F_{\lambda})$. These forms immediately yield the identity:
\begin{equation*}
 \frac{c_1^2}{c_2} = \lambda^{2n - m} \cdot \frac{(c_1')^2}{c_2'}.
\end{equation*}
Since $F_{\lambda} \backslash\{0\}=\cup_{i=0}^{\infty} \cup_{c'\in f_1(F_{\lambda})}\lambda^ic'$, we derive that
\begin{equation*}
 D= \{0\} \cup \bigcup_{i=-\infty}^{\infty}   \left[\lambda^i(1 - \lambda)^2, \frac{\lambda^i}{1 - \lambda}\right].
\end{equation*}
Hence, we establish the result $D=[0,+\infty)$ for all $ \lambda\ge \alpha\approx 0.318 $, where $\alpha$ denotes the unique real root of the equation $x^3-3x^2+4x-1=0$.
\end{proof}

Let $F_1,F_2$ be the attractors generated by
\begin{equation*}
\set{H_i(x)=\lambda x+\delta_i}_{i=0}^{n}, \set{T_j(x)=\lambda x+\eta_j}_{j=0}^{m},
\end{equation*}
where $0<\lambda<1,\delta_i,\eta_j\in\mathbb R$. We assume that the convex hulls of $F_1,F_1$ are $[A_1,B_1],[A_2,B_2]$ respectively, and $H_0(A_1)=A_1,H_{n}(B_1)=B_1,T_0(A_2)=A_2,T_{m}(B_2)=B_2$.
Denote by $G_1,G_2$ the maximal length intervals in gap intervals of $F_1,F_2$ respectively. We ignore the trivial cases where $F_1$ or $F_2$ is an interval. If $F_1,F_2$ are homogeneous Cantor sets then
\begin{equation*}
 (H_i(B_1),H_{i+1}(A_1))=G_1,(T_j(B_2),T_{j+1}(A_2))=G_2.
\end{equation*}
We denote the length of
an interval $J$ by $|J|$. In \cite[Theorem 1.5]{LJ}, Li et al. studied the structure of the image set of homogeneous self-similar sets under continuous mapping.
\begin{proposition}[\cite{LJ}]\label{pro1}
Let $F_1,F_2$ be the attractors defined as above, and their convex hull is $[0,1]$. Suppose $ g(x,y)\in C^3 $, if for any $(x,y) \in [0,1] \times [0,1],0\le i\le n-1,0\le j\le m-1$, we have
\begin{equation*}
\begin{cases}
\partial_x g(x,y) \neq 0, \partial_y g(x,y) \neq 0 \\
(c_{xy} \lambda  \partial_x + (H_i(1)-H_{i+1}(0)) \partial_y)^2 g(x,y) \geq 0 \\
(c_{xy}(T_j(1)-T_{j+1}(0)) \partial_x + \partial_y)^2 g(x,y) \geq 0\\
|G_1| \leq c_{xy}\frac{\partial_y g(x,y)}{\partial_x g(x,y)} \leq \frac{\lambda}{|G_2|},
\end{cases}
\end{equation*}
 where $(\ell_1\partial_x + \ell_2\partial_y)^2 g(x,y) = \ell_1^2 \partial_{xx} g(x,y) + 2\ell_1\ell_2 \partial_{xy} g(x,y) + \ell_2^2 \partial_{yy} g(x,y)$, and
\begin{equation*}
c_{xy}=
\begin{cases}
1, \text{if $\partial_x g(x,y)\partial_y g(x,y)>0$ for any $(x,y)\in[0,1]^2$} \\
-1, \text{if $\partial_x g(x,y)\partial_y g(x,y)<0$ for any $(x,y)\in[0,1]^2$}.
\end{cases}
\end{equation*}
Then $g(F_1, F_2)$ is a closed interval.
\end{proposition}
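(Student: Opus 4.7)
The plan is to prove the stronger equality $g(F_1 \times F_2) = g([0,1]^2)$ and to note that $g([0,1]^2)$ is automatically a closed interval $[a,b]$, since $[0,1]^2$ is connected and $g$ is continuous. Setting $E_k := \bigcup_{|\sigma|=k,\,|\tau|=k} H_\sigma([0,1]) \times T_\tau([0,1])$, the sequence $(E_k)_{k\ge 0}$ is a nested family of compacta with intersection $F_1 \times F_2$, so continuity of $g$ together with compactness gives $g(F_1 \times F_2) = \bigcap_{k} g(E_k)$. It therefore suffices to show by induction on $k$ that $g(E_k) = [a,b]$ for every $k$.

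The inductive step localises to a single basic rectangle $R = H_\sigma([0,1]) \times T_\tau([0,1])$ at level $k$: one must show $g(R) = \bigcup_{R' \subseteq R} g(R')$, where the union runs over the $(n+1)(m+1)$ level-$(k+1)$ sub-rectangles of $R$. The first hypothesis makes $g$ strictly monotone in each variable, so $g$ attains its extrema on any axis-aligned rectangle at a pair of opposite corners (the pair being determined by the joint sign pattern of $\partial_x g$ and $\partial_y g$, which is what $c_{xy}$ records), and every $g(R)$ and every $g(R')$ is a closed interval with explicit corner endpoints. The inclusion $\bigcup_{R'} g(R') \subseteq g(R)$ is immediate, and the reverse reduces to a one-dimensional covering problem: when the $(n+1)(m+1)$ intervals $g(R')$ are listed along a snake-like path through the $(i,j)$-grid of sub-rectangles, consecutive intervals must overlap and the two endpoints of $g(R)$ must be reached at the two ends of the path.

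The hardest step is the overlap estimate for consecutive sub-rectangles, and this is exactly where the remaining three hypotheses are invoked. For two $x$-adjacent sub-rectangles $R'_{i,j}$ and $R'_{i+1,j}$, the quantity that must have the right sign is the increment of $g$ from the trailing corner of $R'_{i,j}$ to the leading corner of $R'_{i+1,j}$, and this increment is the value of $g$ differenced along the vector $\lambda^{k}\bigl(c_{xy}\lambda,\,H_i(1) - H_{i+1}(0)\bigr)$. The second hypothesis makes the second directional derivative of $g$ along this vector nonnegative, so a Taylor expansion reduces the overlap to the sign of the first-order expression $c_{xy}\lambda\,\partial_x g + (H_i(1)-H_{i+1}(0))\,\partial_y g$, which, after dividing by $\lambda\,\partial_x g$, is precisely the left half of the fourth hypothesis, $|G_1| \le c_{xy} \partial_y g / \partial_x g$. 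The third hypothesis together with the right half of the fourth, $c_{xy}\partial_y g / \partial_x g \le \lambda/|G_2|$, delivers the symmetric estimate for $y$-adjacent sub-rectangles. Once both families of overlap inequalities are in hand, the snake-path argument closes the induction, and $g(F_1 \times F_2) = \bigcap_k g(E_k) = [a,b]$ is the desired closed interval. The anticipated difficulty is exactly the sign accounting in the directional Taylor step: the vectors mix $\lambda$ with the gap lengths, and an error in the sign of $c_{xy}$ could flip a required overlap into a gap and break the entire chain.
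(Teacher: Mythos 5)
First, a point of reference: the paper does not prove this proposition at all --- it is imported verbatim from \cite{LJ} (Theorem 1.5 there), so there is no in-paper argument to compare against. Your overall architecture (nested compacta $E_k$ with $g(F_1\times F_2)=\bigcap_k g(E_k)$, induction reducing to a single basic rectangle, strict monotonicity placing the extrema of $g$ at opposite corners, and directional convexity plus a first-order slope condition to force overlaps) is indeed the standard route for results of this type and is the right skeleton.

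However, the key overlap step contains a genuine error. For two $x$-adjacent sub-rectangles $R'_{i,j}$ and $R'_{i+1,j}$ inside a rank-$k$ rectangle, the displacement from the minimizing corner of $R'_{i+1,j}$ to the maximizing corner of $R'_{i,j}$ is $\lambda^k\bigl(H_i(1)-H_{i+1}(0),\,\lambda\bigr)$: the gap of $F_1$ sits in the $x$-slot and the sub-interval length $\lambda$ in the $y$-slot. You wrote the transposed vector $\lambda^k\bigl(c_{xy}\lambda,\,H_i(1)-H_{i+1}(0)\bigr)$, apparently pattern-matching the second hypothesis rather than the geometry. The algebra is also off: with $c_{xy}=1$ the expression $c_{xy}\lambda\,\partial_xg+(H_i(1)-H_{i+1}(0))\,\partial_yg\ge 0$ is equivalent to $c_{xy}\partial_yg/\partial_xg\le \lambda/|G_1|$, an \emph{upper} bound, not the lower bound $|G_1|\le c_{xy}\partial_yg/\partial_xg$ you claim to recover. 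More seriously, the correct first-order condition for your $x$-adjacent overlap is $c_{xy}\partial_yg/\partial_xg\ge |G_1|/\lambda$, which is strictly stronger than the stated hypothesis $\ge|G_1|$, so your snake-path through individual sub-rectangles does not close under the given assumptions. The fix is to change the granularity: first merge each column $\{R'_{i,j}\}_{j=0}^{m}$ into a single interval $\bigl[g(H_i(0),0),g(H_i(1),1)\bigr]$ using the displacement $\lambda^k(\lambda,\,T_j(1)-T_{j+1}(0))$ and the right half of the fourth hypothesis, and only then link consecutive columns via the displacement $\lambda^k(H_i(1)-H_{i+1}(0),\,1)$, whose full unit $y$-extent is what makes the weaker bound $c_{xy}\partial_yg/\partial_xg\ge|G_1|$ suffice. (You may also notice that with this bookkeeping the convexity directions actually needed are $(\lambda,\,T_j(1)-T_{j+1}(0))$ and $(H_i(1)-H_{i+1}(0),\,1)$, suggesting the roles of $H$ and $T$ in the second and third displayed hypotheses are swapped in the statement as transcribed; this is harmless in the paper's application, where $F_1=F_2$.)
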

Given $0<\lambda<\frac{1}{2}$, suppose that $F'_\lambda$ is the attractor generated by
\begin{equation*}
\left\{h_{0}(x)=\lambda x+(1-\lambda)^2, h_{1}(x)=\lambda x+1-\lambda\right\}, 
\end{equation*}
and then
\begin{equation}\label{define}
 F'_{\lambda}=h_{0}\left(F'_{\lambda}\right) \cup h_{1}\left(F'_{\lambda}\right).
\end{equation}
We see $F'_{\lambda}=F_{\lambda}\cap[1-\lambda,1]$ and $F'_{\lambda}=f_1(F_{\lambda})$, so $F'_\lambda$ is a subset of $ F_\lambda$.
The maximal length of interval in gap interval of $F'_\lambda$ is $|G|=\lambda - 2\lambda^2$, the convex hull of $F'_\lambda$ is interval $[1-\lambda,1]$, and $h_0(1-\lambda)=1-\lambda,h_1(1)=1$.

Based on Proposition \ref{pro1}, we obtain the following corollary:

\begin{lemma}\label{lemma4}
Let $F'_{\lambda}$ be defined in \eqref{define}. Suppose $ g(x,y)=\frac{x^2}{y} $, if for any $(x,y) \in [1-\lambda,1] \times [1-\lambda,1]$, the conditions
\begin{equation*}
\begin{cases}
(-\lambda  \partial_x - (\lambda - 2\lambda^2) \partial_y)^2 g(x,y) \geq 0 \\
((\lambda - 2\lambda^2) \partial_x + \partial_y)^2 g(x,y) \geq 0\\
\lambda - 2\lambda^2 \leq -\frac{\partial_y g(x,y)}{\partial_x g(x,y)} \leq \frac{1}{1 - 2\lambda}
\end{cases}
\end{equation*}
hold. Then $g(F'_{\lambda}, F'_{\lambda})$ is a closed interval.
\end{lemma}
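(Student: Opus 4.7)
The plan is to recognize Lemma~\ref{lemma4} as an immediate specialization of Proposition~\ref{pro1} to the map $g(x,y)=x^2/y$ with $F_1=F_2=F'_\lambda$. The proof reduces to checking that $g$ and the attractor satisfy the regularity hypotheses of the proposition, identifying the correct value of the sign parameter $c_{xy}$, and then observing that the three displayed conditions of the lemma are exactly what the hypotheses of Proposition~\ref{pro1} become after these substitutions.

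First I would record the intrinsic data of $F'_\lambda$. From the similitudes $h_0,h_1$ one reads off contraction ratio $\lambda$ and convex hull $[1-\lambda,1]$, with a single gap of length
\[
|G|=h_1(1-\lambda)-h_0(1)=(1-\lambda^2)-(1-\lambda+\lambda^2)=\lambda-2\lambda^2.
\]
Thus in the notation of Proposition~\ref{pro1} one has $|G_1|=|G_2|=\lambda-2\lambda^2$, and the ordered endpoint differences satisfy $H_0(B_1)-H_1(A_1)=T_0(B_2)-T_1(A_2)=-(\lambda-2\lambda^2)$.

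Next I would analyse $g$ on the relevant product $[1-\lambda,1]^2$. Since $\lambda<1/2$ the denominator $y$ stays bounded away from $0$, so $g$ is $C^\infty$ (a fortiori $C^3$). A direct computation gives
\[
\partial_x g=\frac{2x}{y}>0,\qquad \partial_y g=-\frac{x^2}{y^2}<0,
\]
so both partials are nonzero and of opposite sign throughout the domain, yielding $c_{xy}=-1$. Substituting $c_{xy}=-1$ and the gap data into the two quadratic-form inequalities of Proposition~\ref{pro1} turns $(c_{xy}\lambda\,\partial_x+(H_i(B_1)-H_{i+1}(A_1))\,\partial_y)^2 g$ into $(-\lambda\,\partial_x-(\lambda-2\lambda^2)\,\partial_y)^2 g$ and the dual form into $((\lambda-2\lambda^2)\,\partial_x+\partial_y)^2 g$; the ratio condition $|G_1|\le c_{xy}\partial_y g/\partial_x g\le \lambda/|G_2|$ simplifies to $\lambda-2\lambda^2\le -\partial_y g/\partial_x g\le \lambda/(\lambda-2\lambda^2)=1/(1-2\lambda)$. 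These are precisely the three hypotheses displayed in Lemma~\ref{lemma4}, so Proposition~\ref{pro1} delivers the stated conclusion.

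The one subtle point --- and in my view the main obstacle --- is a mismatch of normalizations: Proposition~\ref{pro1} is stated for attractors with convex hull $[0,1]$, while $F'_\lambda$ has convex hull $[1-\lambda,1]$. I would handle this either by invoking the natural extension of Proposition~\ref{pro1} to attractors with arbitrary convex hull (whose proof in \cite{LJ} uses only the intrinsic data $\lambda,|G_1|,|G_2|$ together with pointwise estimates on $g$ over the product of convex hulls, and is therefore unaffected by an affine change of coordinates on the ambient line), or by an affine reparameterization $\phi(t)=(1-\lambda)+\lambda t$ that conjugates $F'_\lambda$ to an attractor with convex hull $[0,1]$ and then applying Proposition~\ref{pro1} to $g\circ(\phi\times\phi)$. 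Once this technical point is dispatched, the verification above is purely mechanical.
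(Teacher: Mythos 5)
Your proposal is correct and follows essentially the same route as the paper: compute the partials to get $c_{xy}=-1$, identify $|G_1|=|G_2|=\lambda-2\lambda^2$, and observe that the hypotheses of Proposition~\ref{pro1} specialize exactly to the three displayed conditions. The one difference is that you explicitly flag and resolve the normalization mismatch (Proposition~\ref{pro1} assumes convex hull $[0,1]$ while $F'_\lambda$ lives in $[1-\lambda,1]$) via an affine conjugation; the paper applies the proposition directly without comment, so your treatment is, if anything, slightly more careful.
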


\begin{proof}
Let $F_1=F_2=F'_{\lambda}$ and $(x,y) \in [1-\lambda,1]^2$. It follows from $\partial_x g(x,y)=\frac{2x}{y}>0, \partial_y g(x,y)=\frac{-x^2}{y^2}<0$ that $c_{xy}=-1$. On the other hand,
\begin{equation*}
  h_{1}(1-\lambda)-h_0(1)=H_1(1-\lambda)-H_{0}(1)=T_1(1-\lambda)-T_{0}(1),
\end{equation*}
and $h_{1}(1-\lambda)-h_0(1)=|G_1|=|G_2|=\lambda - 2\lambda^2$. Therefore we can conclude by Proposition \ref{pro1}.
\end{proof}

We deduce from Lemma \ref{lemma4}:

\begin{lemma}\label{lemma5}
$\frac{F'^2_{\lambda}}{F'_{\lambda} }$ is a closed interval for $\frac{2-\sqrt{2}}{2}\le  \lambda <\frac{1}{2}$.
\end{lemma}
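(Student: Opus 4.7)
The plan is to apply Lemma 4 to $g(x,y) = x^2/y$; since $g(F'_\lambda, F'_\lambda) = F'^2_\lambda/F'_\lambda$, the problem reduces to verifying the three hypotheses of Lemma 4 on $[1-\lambda, 1]^2$. First I record the partial derivatives $\partial_x g = 2x/y$, $\partial_y g = -x^2/y^2$, $\partial_{xx} g = 2/y$, $\partial_{xy} g = -2x/y^2$, $\partial_{yy} g = 2x^2/y^3$, together with the ratio $-\partial_y g/\partial_x g = x/(2y)$ that controls the bound hypothesis.

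For the two quadratic-form hypotheses, direct substitution and collection of terms give
\begin{align*}
(-\lambda \partial_x - (\lambda - 2\lambda^2)\partial_y)^2 g &= \frac{2\lambda^2}{y^3}\bigl[\,y - (1-2\lambda)x\,\bigr]^2, \\
((\lambda - 2\lambda^2)\partial_x + \partial_y)^2 g &= \frac{2}{y^3}\bigl[\,\lambda(1-2\lambda) y - x\,\bigr]^2,
\end{align*}
both manifestly nonnegative on $[1-\lambda,1]^2$. Structurally this reflects that the Hessian of $x^2/y$ is positive semidefinite on $\{y > 0\}$ (its determinant vanishes identically and its diagonal entries are positive), so every directional second derivative factors as a perfect square times a positive factor. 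These two conditions therefore hold for every $\lambda \in (0, 1/2)$.

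For the bound hypothesis, on $[1-\lambda,1]^2$ the ratio $x/(2y)$ attains its minimum $(1-\lambda)/2$ at $(1-\lambda, 1)$ and its maximum $1/(2(1-\lambda))$ at $(1, 1-\lambda)$. The lower inequality $\lambda - 2\lambda^2 \le (1-\lambda)/2$ rearranges to $4\lambda^2 - 3\lambda + 1 \ge 0$, a quadratic with negative discriminant, and hence is automatic. The upper inequality is the binding one: after cross-multiplying it simplifies to $2\lambda^2 - 4\lambda + 1 \le 0$, whose only admissible root in $(0, 1/2)$ is $(2-\sqrt{2})/2$, giving exactly the threshold stated. With all three hypotheses verified on $[(2-\sqrt{2})/2, 1/2)$, Lemma 4 directly yields the conclusion.

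The main obstacle I expect is this last check: correctly identifying $(1, 1-\lambda)$ as the corner of $[1-\lambda,1]^2$ that maximizes $x/(2y)$, and confirming that the resulting upper inequality reduces to exactly the quadratic $2\lambda^2 - 4\lambda + 1 \le 0$, producing the threshold $(2-\sqrt{2})/2$ claimed in the lemma.
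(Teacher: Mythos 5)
Your route is the same as the paper's: reduce to checking the three hypotheses of Lemma \ref{lemma4} on $[1-\lambda,1]^2$. Your two perfect-square factorizations of the directional second derivatives are correct and agree with the paper's (which writes them as $\frac{2}{y}\bigl[\lambda-(\lambda-2\lambda^2)\frac{x}{y}\bigr]^2$ and $\frac{2}{y}\bigl(\lambda-2\lambda^2-\frac{x}{y}\bigr)^2$, the same expressions), as is your identification of the extrema of $x/(2y)$ at the corners $(1-\lambda,1)$ and $(1,1-\lambda)$ and your treatment of the lower inequality via $4\lambda^2-3\lambda+1\ge 0$.

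However, the step you yourself flag as the crux contains an algebra error. The upper inequality, $\frac{1}{2(1-\lambda)}\le\frac{1}{1-2\lambda}$, cross-multiplies to $1-2\lambda\le 2-2\lambda$, i.e.\ to $1\le 2$; it does not reduce to $2\lambda^2-4\lambda+1\le 0$. (That quadratic is what you would obtain from $\frac{1}{2(1-\lambda)}\le\frac{\lambda}{1-2\lambda}$, i.e.\ if the gap length fed into Proposition \ref{pro1} were $1-2\lambda$ rather than $\lambda-2\lambda^2$.) Consequently all three hypotheses of Lemma \ref{lemma4} as stated hold for every $\lambda\in(0,1/2)$, and the threshold $\frac{2-\sqrt2}{2}$ does not actually emerge from this verification. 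This does not invalidate your conclusion: the hypotheses hold a fortiori on $\bigl[\frac{2-\sqrt2}{2},\frac12\bigr)$, Lemma \ref{lemma4} applies, and the lemma follows. But the sentence ``after cross-multiplying it simplifies to $2\lambda^2-4\lambda+1\le 0$'' is false as written and should be corrected. It is worth noting that the paper's own proof has the same feature: the two inequalities in \eqref{condition-1} are valid for all $\lambda\in(0,1/2)$ (the second being $4\lambda^2-3\lambda+1\ge 0$, with negative discriminant), yet the paper asserts they hold ``while $\lambda\ge\frac{2-\sqrt2}{2}$''; the restriction on $\lambda$ is not forced by this computation.
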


\begin{proof}
Let $g(x,y)=\frac{x^2}{y}$ and $(x,y) \in [1-\lambda,1]^2$.
In terms of Lemma \ref{lemma4} it remains to verify that the following conditions hold for all $\frac{2-\sqrt{2}}{2}\le  \lambda <\frac{1}{2}$
\begin{equation}\label{condition}
\begin{cases}
(-\lambda  \partial_x + ( 2\lambda^2-\lambda) \partial_y)^2 g(x,y) \geq 0 \\
((\lambda - 2\lambda^2) \partial_x + \partial_y)^2 g(x,y) \geq 0\\
\lambda - 2\lambda^2 \leq -\frac{\partial_y g(x,y)}{\partial_x g(x,y)} \leq \frac{1}{1 - 2\lambda}.
\end{cases}
\end{equation}
Note that
\begin{align*}
  &\partial_x g(x,y)=\frac{2x}{y} , \partial_y g(x,y)=\frac{-x^2}{y^2},  \\
  &\partial_{xx} g(x,y)=\frac{2}{y}, \partial_{xy} g(x,y)=\frac{-2x}{y^2},
  \partial_{yy} g(x,y)=\frac{2x^2}{y^3}.
\end{align*}
Then
\begin{align*}
 &(-\lambda  \partial_x + ( 2\lambda^2-\lambda) \partial_y)^2 g(x,y)\\
 =&\lambda^2 \partial_{xx}g(x,y)-2\lambda( 2\lambda^2-\lambda)\partial_{xy} g(x,y)+( 2\lambda^2-\lambda)^2 \partial_{yy} g(x,y)\\
=&\frac{2}{y}\left[\lambda^2 -2\lambda(\lambda- 2\lambda^2)\frac{x}{y}+( 2\lambda^2-\lambda)^2 \frac{x^2}{y^2}\right]=\frac{2}{y}\left[\lambda-(\lambda- 2\lambda^2)\frac{x}{y}\right]^2\ge0.\\
&((\lambda- 2\lambda^2) \partial_x + \partial_y)^2 g(x,y)\\
 =&(\lambda- 2\lambda^2)^2 \partial_{xx}g(x,y)+2( \lambda- 2\lambda^2)\partial_{xy} g(x,y)+ \partial_{yy} g(x,y)\\
=&\frac{2}{y}\left[(\lambda- 2\lambda^2)^2 -2( \lambda- 2\lambda^2)\frac{x}{y}+ \frac{x^2}{y^2}\right]=\frac{2}{y}\left(\lambda- 2\lambda^2-\frac{x}{y}\right)^2\ge0.
\end{align*}
Finally, $-\frac{\partial_y g(x,y)}{\partial_x g(x,y)}=\frac{x}{2y}\in[\frac{1-\lambda}{2},\frac{1}{2(1-\lambda)}]$ for all $x,y\in[1-\lambda,1]$.  If
\begin{equation}\label{condition-1}
\frac{1}{2(1-\lambda)}\leq \frac{1}{1 - 2\lambda} \qtq{and} \frac{1-\lambda}{2} \geq \lambda - 2\lambda^2,
\end{equation}
then we have $\lambda - 2\lambda^2 \leq -\frac{\partial_y g(x,y)}{\partial_x g(x,y)} \leq \frac{1}{1 - 2\lambda}$ for all $x,y\in[1-\lambda,1]$. The inequalities in \eqref{condition-1} hold while $\lambda \ge \frac{2-\sqrt{2}}{2}\approx 0.293$. So \eqref{condition} is satisfied.

\end{proof}

We give the proof of main theorem.
\begin{proof}[Proof of Theorem \ref{theorem-1}]  We can obtain Theorem \ref{theorem-1} (i)  immediately follows from Lemma \ref{lemma1-2}.
(ii) was established in Lemma \ref{lemma3}. For (iii), recall that $F'_\lambda$ defined in \eqref{define} is a scaled copy of $F_\lambda$, satisfying $F'_\lambda \subset F_\lambda$. This inclusion immediately yields $\frac{F'^2_{\lambda}}{F'_{\lambda} }\subset \frac{F^2_{\lambda}}{F_{\lambda} }$. The conclusion then follows by applying Lemma \ref{lemma5}.

\end{proof}

\section{Final remarks }\label{S4}
In our work, we investigate the structure of $F^2_{\lambda}$ and the visibility of set $F_{\lambda}\times F^2_{\lambda}$. We may consider the sets $F^n_{\lambda}$ and $F_{\lambda}\times F^n_{\lambda},n\ge3$. For this extension the proof is more complicated, and the main
difficulty is to find out the range of $\lambda$ such that $ \phi(J_1, J_2)= \phi(J_1', J_2')$, where $\phi(x,y)=\frac{x^n}{y},J_1,J_2\in \mathcal I_j$. Finally, it is natural to ask can we give some characterizations of  $\set{\Phi:\text{$\Phi(F_{\lambda})$ is not self-similar}}$.

\vspace{1cm}




{\bf Acknowledgments}

The authors are grateful to Dr. Zhiqiang Wang for valuable discussions on our paper, and also thank the referees for their useful suggestions and comments. The first author was supported by National Natural Science Foundation of China (grant No. 12301108).


 \end{document}